\newtheorem{theorem}{Theorem}
\newtheorem{axiom}[theorem]{Axiom}
\newtheorem{conjecture}[theorem]{Conjecture}
\newtheorem{corollary}[theorem]{Corollary}
\newtheorem{definition}[theorem]{Definition}
\newtheorem{example}[theorem]{Example}
\newtheorem{exercise}[theorem]{Exercise}
\newtheorem{lemma}[theorem]{Lemma}
\newtheorem{proposition}[theorem]{Proposition}
\newtheorem{remark}[theorem]{Remark}
\newenvironment{proof}[1][Proof]{\noindent\textbf{#1.} }{\ \rule{0.5em}{0.5em}}
\let\pdfoutput=\undefined\fi
\chardef\@x10\chardef\@xv60
\def\tcitime{
\def\@time{%
  \@minute\time\@hour\@minute\divide\@hour\@xv
  \ifnum\@hour<\@x 0\fi\the\@hour:%
  \multiply\@hour\@xv\advance\@minute-\@hour
  \ifnum\@minute<\@x 0\fi\the\@minute
  }}%
\def\x@hyperref#1#2#3{%
   % Turn off various catcodes before reading parameter 4
   \catcode`\~ = 12
   \catcode`\$ = 12
   \catcode`\_ = 12
   \catcode`\# = 12
   \catcode`\& = 12
   \catcode`\% = 12
   \y@hyperref{#1}{#2}{#3}%
}
\def\y@hyperref#1#2#3#4{%
   #2\ref{#4}#3
   \catcode`\~ = 13
   \catcode`\$ = 3
   \catcode`\_ = 8
   \catcode`\# = 6
   \catcode`\& = 4
   \catcode`\% = 14
}
\def\QCTOpt[#1]#2{%
  \def\QCTOptB{#1}
  \def\QCTOptA{#2}
}
\def\QCTNOpt#1{%
  \def\QCTOptA{#1}
  \let\QCTOptB\empty
}
\def\Qct{%
  \@ifnextchar[{%
    \QCTOpt}{\QCTNOpt}
}
\def\QCBOpt[#1]#2{%
  \def\QCBOptB{#1}%
  \def\QCBOptA{#2}%
}
\def\QCBNOpt#1{%
  \def\QCBOptA{#1}%
  \let\QCBOptB\empty
}
\def\Qcb{%
  \@ifnextchar[{%
    \QCBOpt}{\QCBNOpt}%
}
\def\PrepCapArgs{%
  \ifx\QCBOptA\empty
    \ifx\QCTOptA\empty
      {}%
    \else
      \ifx\QCTOptB\empty
        {\QCTOptA}%
      \else
        [\QCTOptB]{\QCTOptA}%
      \fi
    \fi
  \else
    \ifx\QCBOptA\empty
      {}%
    \else
      \ifx\QCBOptB\empty
        {\QCBOptA}%
      \else
        [\QCBOptB]{\QCBOptA}%
      \fi
    \fi
  \fi
}
\def\GRAPHICSPS#1{%
 \ifcase\GRAPHICSTYPE%\GRAPHICSTYPE=0
   \special{ps: #1}%
 \or%\GRAPHICSTYPE=1
   \special{language "PS", include "#1"}%
%%%\or%\GRAPHICSTYPE=2
%%%  #1%
 \fi
}%
\def\graffile#1#2#3#4{%
%%% \ifnum\GRAPHICSTYPE=\tw@
%%%  %Following if using psfig
%%%  \@ifundefined{psfig}{\input psfig.tex}{}%
%%%  \psfig{file=#1, height=#3, width=#2}%
%%% \else
  %Following for all others
  % JCS - added BOXTHEFRAME, see below
    \bgroup
	   \@inlabelfalse
       \leavevmode
       \@ifundefined{bbl@deactivate}{\def~{\string~}}{\activesoff}%
        \raise -#4 \BOXTHEFRAME{%
           \hbox to #2{\raise #3\hbox to #2{\null #1\hfil}}}%
    \egroup
}%
\def\draftbox#1#2#3#4{%
 \leavevmode\raise -#4 \hbox{%
  \frame{\rlap{\protect\tiny #1}\hbox to #2%
   {\vrule height#3 width\z@ depth\z@\hfil}%
  }%
 }%
}%
\let\nographics=\@msidraft
\newif\ifwasdraft
\def\GRAPHIC#1#2#3#4#5{%
   \ifnum\@msidraft=\@ne\draftbox{#2}{#3}{#4}{#5}%
   \else\graffile{#1}{#3}{#4}{#5}%
   \fi
}
\def\addtoLaTeXparams#1{%
    \edef\LaTeXparams{\LaTeXparams #1}}%
\newif\ifBoxFrame \BoxFramefalse
\newif\ifOverFrame \OverFramefalse
\newif\ifUnderFrame \UnderFramefalse
\def\BOXTHEFRAME#1{%
   \hbox{%
      \ifBoxFrame
         \frame{#1}%
      \else
         {#1}%
      \fi
   }%
}
\def\doFRAMEparams#1{\BoxFramefalse\OverFramefalse\UnderFramefalse\readFRAMEparams#1\end}%
\def\readFRAMEparams#1{%
 \ifx#1\end%
  \let\next=\relax
  \else
  \ifx#1i\dispkind=\z@\fi
  \ifx#1d\dispkind=\@ne\fi
  \ifx#1f\dispkind=\tw@\fi
  \ifx#1t\addtoLaTeXparams{t}\fi
  \ifx#1b\addtoLaTeXparams{b}\fi
  \ifx#1p\addtoLaTeXparams{p}\fi
  \ifx#1h\addtoLaTeXparams{h}\fi
  \ifx#1X\BoxFrametrue\fi
  \ifx#1O\OverFrametrue\fi
  \ifx#1U\UnderFrametrue\fi
  \ifx#1w
    \ifnum\@msidraft=1\wasdrafttrue\else\wasdraftfalse\fi
    \@msidraft=\@ne
  \fi
  \let\next=\readFRAMEparams
  \fi
 \next
 }%
\def\IFRAME#1#2#3#4#5#6{%
      \bgroup
      \let\QCTOptA\empty
      \let\QCTOptB\empty
      \let\QCBOptA\empty
      \let\QCBOptB\empty
      #6%
      \parindent=0pt
      \leftskip=0pt
      \rightskip=0pt
      \setbox0=\hbox{\QCBOptA}%
      \@tempdima=#1\relax
      \ifOverFrame
          % Do this later
          \typeout{This is not implemented yet}%
          \show\HELP
      \else
         \ifdim\wd0>\@tempdima
            \advance\@tempdima by \@tempdima
            \ifdim\wd0 >\@tempdima
               \setbox1 =\vbox{%
                  \unskip\hbox to \@tempdima{\hfill\GRAPHIC{#5}{#4}{#1}{#2}{#3}\hfill}%
                  \unskip\hbox to \@tempdima{\parbox[b]{\@tempdima}{\QCBOptA}}%
               }%
               \wd1=\@tempdima
            \else
               \textwidth=\wd0
               \setbox1 =\vbox{%
                 \noindent\hbox to \wd0{\hfill\GRAPHIC{#5}{#4}{#1}{#2}{#3}\hfill}\\%
                 \noindent\hbox{\QCBOptA}%
               }%
               \wd1=\wd0
            \fi
         \else
            \ifdim\wd0>0pt
              \hsize=\@tempdima
              \setbox1=\vbox{%
                \unskip\GRAPHIC{#5}{#4}{#1}{#2}{0pt}%
                \break
                \unskip\hbox to \@tempdima{\hfill \QCBOptA\hfill}%
              }%
              \wd1=\@tempdima
           \else
              \hsize=\@tempdima
              \setbox1=\vbox{%
                \unskip\GRAPHIC{#5}{#4}{#1}{#2}{0pt}%
              }%
              \wd1=\@tempdima
           \fi
         \fi
         \@tempdimb=\ht1
         %\advance\@tempdimb by \dp1
         \advance\@tempdimb by -#2
         \advance\@tempdimb by #3
         \leavevmode
         \raise -\@tempdimb \hbox{\box1}%
      \fi
      \egroup%
}%
\def\DFRAME#1#2#3#4#5{%
  \vspace\topsep
  \hfil\break
  \bgroup
     \leftskip\@flushglue
	 \rightskip\@flushglue
	 \parindent\z@
	 \parfillskip\z@skip
     \let\QCTOptA\empty
     \let\QCTOptB\empty
     \let\QCBOptA\empty
     \let\QCBOptB\empty
	 \vbox\bgroup
        \ifOverFrame 
           #5\QCTOptA\par
        \fi
        \GRAPHIC{#4}{#3}{#1}{#2}{\z@}%
        \ifUnderFrame 
           \break#5\QCBOptA
        \fi
	 \egroup
  \egroup
  \vspace\topsep
  \break
}%
\def\FFRAME#1#2#3#4#5#6#7{%
 %If float.sty loaded and float option is 'h', change to 'H'  (gp) 1998/09/05
  \@ifundefined{floatstyle}
    {%floatstyle undefined (and float.sty not present), no change
     \begin{figure}[#1]%
    }
    {%floatstyle DEFINED
	 \ifx#1h%Only the h parameter, change to H
      \begin{figure}[H]%
	 \else
      \begin{figure}[#1]%
	 \fi
	}
  \let\QCTOptA\empty
  \let\QCTOptB\empty
  \let\QCBOptA\empty
  \let\QCBOptB\empty
  \ifOverFrame
    #4
    \ifx\QCTOptA\empty
    \else
      \ifx\QCTOptB\empty
        \caption{\QCTOptA}%
      \else
        \caption[\QCTOptB]{\QCTOptA}%
      \fi
    \fi
    \ifUnderFrame\else
      \label{#5}%
    \fi
  \else
    \UnderFrametrue%
  \fi
  \begin{center}\GRAPHIC{#7}{#6}{#2}{#3}{\z@}\end{center}%
  \ifUnderFrame
    #4
    \ifx\QCBOptA\empty
      \caption{}%
    \else
      \ifx\QCBOptB\empty
        \caption{\QCBOptA}%
      \else
        \caption[\QCBOptB]{\QCBOptA}%
      \fi
    \fi
    \label{#5}%
  \fi
  \end{figure}%
 }%
\def\makeactives{
  \catcode`\"=\active
  \catcode`\;=\active
  \catcode`\:=\active
  \catcode`\'=\active
  \catcode`\~=\active
}
   \gdef\activesoff{%
      \def"{\string"}%
      \def;{\string;}%
      \def:{\string:}%
      \def'{\string'}%
      \def~{\string~}%
      %\bbl@deactivate{"}%
      %\bbl@deactivate{;}%
      %\bbl@deactivate{:}%
      %\bbl@deactivate{'}%
    }
\def\FRAME#1#2#3#4#5#6#7#8{%
 \bgroup
 \ifnum\@msidraft=\@ne
   \wasdrafttrue
 \else
   \wasdraftfalse%
 \fi
 \def\LaTeXparams{}%
 \dispkind=\z@
 \def\LaTeXparams{}%
 \doFRAMEparams{#1}%
 \ifnum\dispkind=\z@\IFRAME{#2}{#3}{#4}{#7}{#8}{#5}\else
  \ifnum\dispkind=\@ne\DFRAME{#2}{#3}{#7}{#8}{#5}\else
   \ifnum\dispkind=\tw@
    \edef\@tempa{\noexpand\FFRAME{\LaTeXparams}}%
    \@tempa{#2}{#3}{#5}{#6}{#7}{#8}%
    \fi
   \fi
  \fi
  \ifwasdraft\@msidraft=1\else\@msidraft=0\fi{}%
  \egroup
 }%
\def\TEXUX#1{"texux"}
\long\def\QQQ#1#2{%
     \long\expandafter\def\csname#1\endcsname{#2}}%
\long\def\QQA#1#2{}%
\def\QTR#1#2{{\csname#1\endcsname {#2}}}%
\def\EXPAND#1[#2]#3{}%
\def\NOEXPAND#1[#2]#3{}%
\def\LaTeXparent#1{}%
\def\ChildStyles#1{}%
\def\ChildDefaults#1{}%
\def\QTagDef#1#2#3{}%
  \providecommand{\UNICODE}[2][]{\protect\rule{.1in}{.1in}}
  \providecommand{\U}[1]{\protect\rule{.1in}{.1in}}
\def\QQfnmark#1{\footnotemark}
 \def\abstract{%
  \if@twocolumn
   \section*{Abstract (Not appropriate in this style!)}%
   \else \small 
   \begin{center}{\bf Abstract\vspace{-.5em}\vspace{\z@}}\end{center}%
   \quotation 
   \fi
  }%
   \def\registered{\relax\ifmmode{}\r@gistered
                    \else$\m@th\r@gistered$\fi}%
 \def\r@gistered{^{\ooalign
  {\hfil\raise.07ex\hbox{$\scriptstyle\rm\text{R}$}\hfil\crcr
  \mathhexbox20D}}}}{}%
\newdimen\theight
\def\newfmtname{LaTeX2e}
  \DeclareOldFontCommand{\rm}{\normalfont\rmfamily}{\mathrm}
  \DeclareOldFontCommand{\sf}{\normalfont\sffamily}{\mathsf}
  \DeclareOldFontCommand{\tt}{\normalfont\ttfamily}{\mathtt}
  \DeclareOldFontCommand{\bf}{\normalfont\bfseries}{\mathbf}
  \DeclareOldFontCommand{\it}{\normalfont\itshape}{\mathit}
  \DeclareOldFontCommand{\sl}{\normalfont\slshape}{\@nomath\sl}
  \DeclareOldFontCommand{\sc}{\normalfont\scshape}{\@nomath\sc}
\def\alpha{{\Greekmath 010B}}%
\def\beta{{\Greekmath 010C}}%
\def\gamma{{\Greekmath 010D}}%
\def\delta{{\Greekmath 010E}}%
\def\epsilon{{\Greekmath 010F}}%
\def\zeta{{\Greekmath 0110}}%
\def\eta{{\Greekmath 0111}}%
\def\theta{{\Greekmath 0112}}%
\def\iota{{\Greekmath 0113}}%
\def\kappa{{\Greekmath 0114}}%
\def\lambda{{\Greekmath 0115}}%
\def\mu{{\Greekmath 0116}}%
\def\nu{{\Greekmath 0117}}%
\def\xi{{\Greekmath 0118}}%
\def\pi{{\Greekmath 0119}}%
\def\rho{{\Greekmath 011A}}%
\def\sigma{{\Greekmath 011B}}%
\def\tau{{\Greekmath 011C}}%
\def\upsilon{{\Greekmath 011D}}%
\def\phi{{\Greekmath 011E}}%
\def\chi{{\Greekmath 011F}}%
\def\psi{{\Greekmath 0120}}%
\def\omega{{\Greekmath 0121}}%
\def\varepsilon{{\Greekmath 0122}}%
\def\vartheta{{\Greekmath 0123}}%
\def\varpi{{\Greekmath 0124}}%
\def\varrho{{\Greekmath 0125}}%
\def\varsigma{{\Greekmath 0126}}%
\def\varphi{{\Greekmath 0127}}%
\def\nabla{{\Greekmath 0272}}
\def\FindBoldGroup{%
   {\setbox0=\hbox{$\mathbf{x\global\edef\theboldgroup{\the\mathgroup}}$}}%
}
\def\Greekmath#1#2#3#4{%
    \if@compatibility
        \ifnum\mathgroup=\symbold
           \mathchoice{\mbox{\boldmath$\displaystyle\mathchar"#1#2#3#4$}}%
                      {\mbox{\boldmath$\textstyle\mathchar"#1#2#3#4$}}%
                      {\mbox{\boldmath$\scriptstyle\mathchar"#1#2#3#4$}}%
                      {\mbox{\boldmath$\scriptscriptstyle\mathchar"#1#2#3#4$}}%
        \else
           \mathchar"#1#2#3#4% 
        \fi 
    \else 
        \FindBoldGroup
        \ifnum\mathgroup=\theboldgroup % For 2e
           \mathchoice{\mbox{\boldmath$\displaystyle\mathchar"#1#2#3#4$}}%
                      {\mbox{\boldmath$\textstyle\mathchar"#1#2#3#4$}}%
                      {\mbox{\boldmath$\scriptstyle\mathchar"#1#2#3#4$}}%
                      {\mbox{\boldmath$\scriptscriptstyle\mathchar"#1#2#3#4$}}%
        \else
           \mathchar"#1#2#3#4% 
        \fi     	    
	  \fi}
\newif\ifGreekBold  \GreekBoldfalse
\let\SAVEPBF=\pbf
\def\pbf{\GreekBoldtrue\SAVEPBF}%
  \newcounter{equationnumber}  
  \def\mathletters{%
     \addtocounter{equation}{1}
     \edef\@currentlabel{\theequation}%
     \setcounter{equationnumber}{\c@equation}
     \setcounter{equation}{0}%
     \edef\theequation{\@currentlabel\noexpand\alph{equation}}%
  }
    \def\BibTeX{{\rm B\kern-.05em{\sc i\kern-.025em b}\kern-.08em
                 T\kern-.1667em\lower.7ex\hbox{E}\kern-.125emX}}}{}%
\def\AmS{{\protect\usefont{OMS}{cmsy}{m}{n}%
                A\kern-.1667em\lower.5ex\hbox{M}\kern-.125emS}}}{}%
\def\@@eqncr{\let\@tempa\relax
    \ifcase\@eqcnt \def\@tempa{& & &}\or \def\@tempa{& &}%
      \else \def\@tempa{&}\fi
     \@tempa
     \if@eqnsw
        \iftag@
           \@taggnum
        \else
           \@eqnnum\stepcounter{equation}%
        \fi
     \fi
     \global\tag@false
     \global\@eqnswtrue
     \global\@eqcnt\z@\cr}
\def\TCItag{\@ifnextchar*{\@TCItagstar}{\@TCItag}}
\def\@TCItag#1{%
    \global\tag@true
    \global\def\@taggnum{(#1)}%
    \global\def\@currentlabel{#1}}
\def\@TCItagstar*#1{%
    \global\tag@true
    \global\def\@taggnum{#1}%
    \global\def\@currentlabel{#1}}
\def\tint{\msi@int\textstyle\int}%
\def\tiint{\msi@int\textstyle\iint}%
\def\tiiint{\msi@int\textstyle\iiint}%
\def\tiiiint{\msi@int\textstyle\iiiint}%
\def\tidotsint{\msi@int\textstyle\idotsint}%
\def\toint{\msi@int\textstyle\oint}%
\newtoks\temptoksa
\newtoks\temptoksb
\newtoks\temptoksc
\def\msi@int#1#2{%
 \def\@temp{{#1#2\the\temptoksc_{\the\temptoksa}^{\the\temptoksb}}}%   
 \futurelet\@nextcs
 \@int
}
\def\@int{%
   \ifx\@nextcs\limits
      \typeout{Found limits}%
      \temptoksc={\limits}%
	  \let\@next\@intgobble%
   \else\ifx\@nextcs\nolimits
      \typeout{Found nolimits}%
      \temptoksc={\nolimits}%
	  \let\@next\@intgobble%
   \else
      \typeout{Did not find limits or no limits}%
      \temptoksc={}%
      \let\@next\msi@limits%
   \fi\fi
   \@next   
}%
\def\@intgobble#1{%
   \typeout{arg is #1}%
   \msi@limits
}
\def\msi@limits{%
   \temptoksa={}%
   \temptoksb={}%
   \@ifnextchar_{\@limitsa}{\@limitsb}%
}
\def\@limitsa_#1{%
   \temptoksa={#1}%
   \@ifnextchar^{\@limitsc}{\@temp}%
}
\def\@limitsb{%
   \@ifnextchar^{\@limitsc}{\@temp}%
}
\def\@limitsc^#1{%
   \temptoksb={#1}%
   \@ifnextchar_{\@limitsd}{\@temp}%   
}
\def\@limitsd_#1{%
   \temptoksa={#1}%
   \@temp
}
\def\dint{\msi@int\displaystyle\int}%
\def\diint{\msi@int\displaystyle\iint}%
\def\diiint{\msi@int\displaystyle\iiint}%
\def\diiiint{\msi@int\displaystyle\iiiint}%
\def\didotsint{\msi@int\displaystyle\idotsint}%
\def\doint{\msi@int\displaystyle\oint}%
\def\ExitTCILatex{\makeatother }
\if@compatibility\message{amsmath already loaded}\fi\aftergroup\ExitTCILatex}
\if@compatibility\message{amstex already loaded}\fi\aftergroup\ExitTCILatex}
\if@compatibility\message{amsgen already loaded}\fi\aftergroup\ExitTCILatex}
\let\DOTSI\relax
\def\RIfM@{\relax\ifmmode}%
\def\FN@{\futurelet\next}%
\def\iint{\DOTSI\intno@\tw@\FN@\ints@}%
\def\iiint{\DOTSI\intno@\thr@@\FN@\ints@}%
\def\iiiint{\DOTSI\intno@4 \FN@\ints@}%
\def\idotsint{\DOTSI\intno@\z@\FN@\ints@}%
\def\ints@{\findlimits@\ints@@}%
\newif\iflimtoken@
\newif\iflimits@
\def\findlimits@{\limtoken@true\ifx\next\limits\limits@true
 \else\ifx\next\nolimits\limits@false\else
 \limtoken@false\ifx\ilimits@\nolimits\limits@false\else
 \ifinner\limits@false\else\limits@true\fi\fi\fi\fi}%
\def\multint@{\int\ifnum\intno@=\z@\intdots@                          %1
 \else\intkern@\fi                                                    %2
 \ifnum\intno@>\tw@\int\intkern@\fi                                   %3
 \ifnum\intno@>\thr@@\int\intkern@\fi                                 %4
 \int}%                                                               %5
\def\multintlimits@{\intop\ifnum\intno@=\z@\intdots@\else\intkern@\fi
 \ifnum\intno@>\tw@\intop\intkern@\fi
 \ifnum\intno@>\thr@@\intop\intkern@\fi\intop}%
\def\intic@{%
    \mathchoice{\hskip.5em}{\hskip.4em}{\hskip.4em}{\hskip.4em}}%
\def\negintic@{\mathchoice
 {\hskip-.5em}{\hskip-.4em}{\hskip-.4em}{\hskip-.4em}}%
\def\ints@@{\iflimtoken@                                              %1
 \def\ints@@@{\iflimits@\negintic@
   \mathop{\intic@\multintlimits@}\limits                             %2
  \else\multint@\nolimits\fi                                          %3
  \eat@}%                                                             %4
 \else                                                                %5
 \def\ints@@@{\iflimits@\negintic@
  \mathop{\intic@\multintlimits@}\limits\else
  \multint@\nolimits\fi}\fi\ints@@@}%
\def\intkern@{\mathchoice{\!\!\!}{\!\!}{\!\!}{\!\!}}%
\def\plaincdots@{\mathinner{\cdotp\cdotp\cdotp}}%
\def\intdots@{\mathchoice{\plaincdots@}%
 {{\cdotp}\mkern1.5mu{\cdotp}\mkern1.5mu{\cdotp}}%
 {{\cdotp}\mkern1mu{\cdotp}\mkern1mu{\cdotp}}%
 {{\cdotp}\mkern1mu{\cdotp}\mkern1mu{\cdotp}}}%
\def\RIfM@{\relax\protect\ifmmode}
\def\text{\RIfM@\expandafter\text@\else\expandafter\mbox\fi}
\let\nfss@text\text
\def\text@#1{\mathchoice
   {\textdef@\displaystyle\f@size{#1}}%
   {\textdef@\textstyle\tf@size{\firstchoice@false #1}}%
   {\textdef@\textstyle\sf@size{\firstchoice@false #1}}%
   {\textdef@\textstyle \ssf@size{\firstchoice@false #1}}%
   \glb@settings}
\def\textdef@#1#2#3{\hbox{{%
                    \everymath{#1}%
                    \let\f@size#2\selectfont
                    #3}}}
\newif\iffirstchoice@
\def\Let@{\relax\iffalse{\fi\let\\=\cr\iffalse}\fi}%
\def\vspace@{\def\vspace##1{\crcr\noalign{\vskip##1\relax}}}%
\def\multilimits@{\bgroup\vspace@\Let@
 \baselineskip\fontdimen10 \scriptfont\tw@
 \advance\baselineskip\fontdimen12 \scriptfont\tw@
 \lineskip\thr@@\fontdimen8 \scriptfont\thr@@
 \lineskiplimit\lineskip
 \vbox\bgroup\ialign\bgroup\hfil$\m@th\scriptstyle{##}$\hfil\crcr}%
\def\Sb{_\multilimits@}%
\def\endSb{\crcr\egroup\egroup\egroup}%
\def\Sp{^\multilimits@}%
\newdimen\ex@
\def\rightarrowfill@#1{$#1\m@th\mathord-\mkern-6mu\cleaders
 \hbox{$#1\mkern-2mu\mathord-\mkern-2mu$}\hfill
 \mkern-6mu\mathord\rightarrow$}%
\def\leftarrowfill@#1{$#1\m@th\mathord\leftarrow\mkern-6mu\cleaders
 \hbox{$#1\mkern-2mu\mathord-\mkern-2mu$}\hfill\mkern-6mu\mathord-$}%
\def\leftrightarrowfill@#1{$#1\m@th\mathord\leftarrow
\mkern-6mu\cleaders
 \hbox{$#1\mkern-2mu\mathord-\mkern-2mu$}\hfill
 \mkern-6mu\mathord\rightarrow$}%
\def\overrightarrow{\mathpalette\overrightarrow@}%
\def\overrightarrow@#1#2{\vbox{\ialign{##\crcr\rightarrowfill@#1\crcr
 \noalign{\kern-\ex@\nointerlineskip}$\m@th\hfil#1#2\hfil$\crcr}}}%
\def\overleftarrow{\mathpalette\overleftarrow@}%
\def\overleftarrow@#1#2{\vbox{\ialign{##\crcr\leftarrowfill@#1\crcr
 \noalign{\kern-\ex@\nointerlineskip}$\m@th\hfil#1#2\hfil$\crcr}}}%
\def\overleftrightarrow{\mathpalette\overleftrightarrow@}%
\def\overleftrightarrow@#1#2{\vbox{\ialign{##\crcr
   \leftrightarrowfill@#1\crcr
 \noalign{\kern-\ex@\nointerlineskip}$\m@th\hfil#1#2\hfil$\crcr}}}%
\def\underrightarrow{\mathpalette\underrightarrow@}%
\def\underrightarrow@#1#2{\vtop{\ialign{##\crcr$\m@th\hfil#1#2\hfil
  $\crcr\noalign{\nointerlineskip}\rightarrowfill@#1\crcr}}}%
\def\underleftarrow{\mathpalette\underleftarrow@}%
\def\underleftarrow@#1#2{\vtop{\ialign{##\crcr$\m@th\hfil#1#2\hfil
  $\crcr\noalign{\nointerlineskip}\leftarrowfill@#1\crcr}}}%
\def\underleftrightarrow{\mathpalette\underleftrightarrow@}%
\def\underleftrightarrow@#1#2{\vtop{\ialign{##\crcr$\m@th
  \hfil#1#2\hfil$\crcr
 \noalign{\nointerlineskip}\leftrightarrowfill@#1\crcr}}}%
\def\qopnamewl@#1{\mathop{\operator@font#1}\nlimits@}
\let\nlimits@\displaylimits
\def\setboxz@h{\setbox\z@\hbox}
\def\varlim@#1#2{\mathop{\vtop{\ialign{##\crcr
 \hfil$#1\m@th\operator@font lim$\hfil\crcr
 \noalign{\nointerlineskip}#2#1\crcr
 \noalign{\nointerlineskip\kern-\ex@}\crcr}}}}
 \def\rightarrowfill@#1{\m@th\setboxz@h{$#1-$}\ht\z@\z@
  $#1\copy\z@\mkern-6mu\cleaders
  \hbox{$#1\mkern-2mu\box\z@\mkern-2mu$}\hfill
  \mkern-6mu\mathord\rightarrow$}
\def\leftarrowfill@#1{\m@th\setboxz@h{$#1-$}\ht\z@\z@
  $#1\mathord\leftarrow\mkern-6mu\cleaders
  \hbox{$#1\mkern-2mu\copy\z@\mkern-2mu$}\hfill
  \mkern-6mu\box\z@$}
\def\projlim{\qopnamewl@{proj\,lim}}
\def\injlim{\qopnamewl@{inj\,lim}}
\def\varinjlim{\mathpalette\varlim@\rightarrowfill@}
\def\varprojlim{\mathpalette\varlim@\leftarrowfill@}
\def\varliminf{\mathpalette\varliminf@{}}
\def\varliminf@#1{\mathop{\underline{\vrule\@depth.2\ex@\@width\z@
   \hbox{$#1\m@th\operator@font lim$}}}}
\def\varlimsup{\mathpalette\varlimsup@{}}
\def\varlimsup@#1{\mathop{\overline
  {\hbox{$#1\m@th\operator@font lim$}}}}
\def\align{\@verbatim \frenchspacing\@vobeyspaces \@alignverbatim
You are using the "align" environment in a style in which it is not defined.}
\let\csname endalign*\endcsname =\endtrivlist
\def\alignat{\@verbatim \frenchspacing\@vobeyspaces \@alignatverbatim
You are using the "alignat" environment in a style in which it is not defined.}
\let\csname endalignat*\endcsname =\endtrivlist
\def\xalignat{\@verbatim \frenchspacing\@vobeyspaces \@xalignatverbatim
You are using the "xalignat" environment in a style in which it is not defined.}
\let\csname endxalignat*\endcsname =\endtrivlist
\def\gather{\@verbatim \frenchspacing\@vobeyspaces \@gatherverbatim
You are using the "gather" environment in a style in which it is not defined.}
\let\csname endgather*\endcsname =\endtrivlist
\def\multiline{\@verbatim \frenchspacing\@vobeyspaces \@multilineverbatim
You are using the "multiline" environment in a style in which it is not defined.}
\let\csname endmultiline*\endcsname =\endtrivlist
\def\arrax{\@verbatim \frenchspacing\@vobeyspaces \@arraxverbatim
You are using a type of "array" construct that is only allowed in AmS-LaTeX.}
\def\tabulax{\@verbatim \frenchspacing\@vobeyspaces \@tabulaxverbatim
You are using a type of "tabular" construct that is only allowed in AmS-LaTeX.}
\let\csname endarrax*\endcsname =\endtrivlist
\let\csname endtabulax*\endcsname =\endtrivlist
 \def\endequation{%
     \ifmmode\ifinner % FLEQN hack
      \iftag@
        \addtocounter{equation}{-1} % undo the increment made in the begin part
        $\hfil
           \displaywidth\linewidth\@taggnum\egroup \endtrivlist
        \global\tag@false
        \global\@ignoretrue   
      \else
        $\hfil
           \displaywidth\linewidth\@eqnnum\egroup \endtrivlist
        \global\tag@false
        \global\@ignoretrue 
      \fi
     \else   
      \iftag@
        \addtocounter{equation}{-1} % undo the increment made in the begin part
        \eqno \hbox{\@taggnum}
        \global\tag@false%
        $$\global\@ignoretrue
      \else
        \eqno \hbox{\@eqnnum}% $$ BRACE MATCHING HACK
        $$\global\@ignoretrue
      \fi
     \fi\fi
 } 
 \newif\iftag@ \tag@false
 \def\TCItag{\@ifnextchar*{\@TCItagstar}{\@TCItag}}
 \def\@TCItag#1{%
     \global\tag@true
     \global\def\@taggnum{(#1)}%
     \global\def\@currentlabel{#1}}
 \def\@TCItagstar*#1{%
     \global\tag@true
     \global\def\@taggnum{#1}%
     \global\def\@currentlabel{#1}}
     \def\tag{\@ifnextchar*{\@tagstar}{\@tag}}
     \def\@tag#1{%
         \global\tag@true
         \global\def\@taggnum{(#1)}}
     \def\@tagstar*#1{%
         \global\tag@true
         \global\def\@taggnum{#1}}
\begin{document}

\title{A strong form of Plessner's theorem}
\date{}
\author{Stephen J. Gardiner and Myrto Manolaki}
\maketitle

\begin{abstract}
Let $f$ be a holomorphic, or even meromorphic, function on the unit disc.
Plessner's theorem then says that, for almost every boundary point $\zeta $,
either (i) $f$ has a finite nontangential limit at $\zeta $, or (ii) the
image $f(S)$ of any Stolz angle $S$ at $\zeta $ is dense in the complex
plane. This paper shows that statement (ii) can be replaced by a much
stronger assertion. This new theorem and its analogue for harmonic functions
on halfspaces also strengthen classical results of Spencer, Stein and
Carleson.
\end{abstract}

\section{Introduction}

\footnotetext{%
\noindent \noindent 2010 \textit{Mathematics Subject Classification } 30D40,
30K05, 30K10 31B25.
\par
\noindent \textit{Keywords: }holomorphic functions, meromorphic functions,
boundary behaviour, Plessner's theorem, harmonic functions.}Let $\mathbb{D}$
be the unit disc of the complex plane, $\mathbb{T}$ be the unit circle, and $%
\lambda _{n}$ denote Lebesgue measure on $\mathbb{R}^{n}$ $(n\geq 1)$, where
we identify $\mathbb{C}$ with $\mathbb{R}^{2}$, and $\mathbb{T}$ with $%
[0,2\pi )$, in the usual way. By a Stolz angle at a point $\zeta $ of $%
\mathbb{T}$ we mean an open triangular subset of $\mathbb{D}$ that has a
vertex at $\zeta $ and is symmetric about the diameter of $\mathbb{D}$\
through $\zeta $. A fundamental result concerning the boundary behaviour of
holomorphic functions is as follows (see the original paper \cite{Ple} or
any of the books \cite{CL}, \cite{Pomm}, \cite{GM}).

\bigskip

\noindent \textbf{Theorem A (Plessner) }\textit{Let }$f$\textit{\ be a
holomorphic function on }$\mathbb{D}$.\textit{\ Then, for }$\lambda _{1}$-%
\textit{almost every point }$\zeta $ \textit{of }$\mathbb{T}$,\textit{\
either}\newline
\textit{(i)\ }$f$\textit{\ has a finite nontangential limit at }$\zeta $%
\textit{, or\ \newline
(ii) }$\overline{f(S)}=\mathbb{C}$\textit{\ for every Stolz angle }$S$%
\textit{\ at }$\zeta $\textit{.}

\bigskip

Although this result describes a stark dichotomy in boundary behaviour, it
has been a long-standing open question whether condition (ii) can be
significantly strengthened. For example, Collingwood, one of the authors of
the standard text \cite{CL} on cluster sets, asked over 50 years ago whether
the statement that $\overline{f(S)}=\mathbb{C}$ can be replaced by the much
stronger assertion that $\lambda _{2}(\mathbb{C}\backslash f(S))=0$ (Problem
5.20 in \cite{H67} or \cite{HL}; see also Problem 5.57 in \cite{HL}). Below
we give a different substantial improvement of Plessner's theorem. We denote
the circle $\{z\in \mathbb{C}:\left\vert z-w\right\vert =r\}$ by $C_{w,r}$.

\begin{theorem}
\label{main}Let $f$\ be a holomorphic function on $\mathbb{D}$.\ Then, for $%
\lambda _{1}$-almost every point $\zeta $ of $\mathbb{T}$,\ either\newline
(i)\ $f$\ has a finite nontangential limit at $\zeta $, or\ \newline
(ii) for every Stolz angle $S$\ at $\zeta $,%
\begin{equation}
\dint_{S\cap f^{-1}(C_{w,r})}\left\vert f^{\prime }(z)\right\vert \left\vert
dz\right\vert =\infty  \label{length}
\end{equation}%
for $\lambda _{3}$-almost every $(w,r)\in \mathbb{C}\times (0,\infty )$.
\end{theorem}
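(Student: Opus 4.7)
The plan is to translate Theorem \ref{main} into a statement about level-set integrals of the harmonic function $u_w(z):=\log|f(z)-w|$, dispose of the ``infinite NT limit'' case with Privalov's uniqueness theorem, and finally appeal to a level-set lemma for oscillating harmonic functions.

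First, applying the coarea formula to the Lipschitz function $z\mapsto|f(z)-w|$ (whose gradient has modulus $|f'(z)|$ wherever $f\ne w$) yields the identity
\[
\int_S|f'(z)|^{2}\,dA(z)\;=\;\int_{0}^{\infty}L(w,r)\,dr\qquad(w\in\mathbb{C}),
\]
and the Cauchy--Riemann equations for a local branch of $\log(f-w)$ give $|f'|=|f-w|\cdot|\nabla u_w|$ off $f^{-1}(w)$, so that on the level set $\{|f-w|=r\}$,
\[
L(w,r)\;=\;r\int_{S\cap\{u_w=\log r\}}|\nabla u_w|\,d\mathcal{H}^{1}.
\]
The classical local Fatou theorem of Spencer--Stein says that absence of a finite NT limit of $f$ at $\zeta$ forces $\int_S|f'|^2\,dA=\infty$ on every Stolz angle $S$ at $\zeta$; hence $L(w,\cdot)\notin L^{1}(dr)$ for every $w$. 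This is strictly weaker than the conclusion I seek.

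Secondly, I would eliminate the possibility of an infinite NT limit on a positive-measure set: if $f$ had NT limit $\infty$ on $E\subset\mathbb{T}$ with $\lambda_{1}(E)>0$, then $1/f$ would be meromorphic on $\mathbb{D}$ with NT limit $0$ on $E$, forcing $1/f\equiv0$ by Privalov's uniqueness theorem, which is absurd. So for $\lambda_{1}$-a.e.\ $\zeta$ without a finite NT limit, $f$ has no NT limit in $\widehat{\mathbb{C}}$ at $\zeta$. Plessner's original theorem then forces $u_w$ to have no NT limit in $\overline{\mathbb{R}}$ for \emph{every} $w\in\mathbb{C}$: indeed, a finite positive NT limit $r_0$ for $|f-w|$ would confine $f$ nontangentially near the circle $C_{w,r_0}$, contradicting density of $f(S)$ in $\mathbb{C}$; a NT limit $0$ would give $f$ the finite NT limit $w$; and a NT limit $+\infty$ was just ruled out.

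The crux is then the following \emph{level-set lemma}: if $u$ is harmonic on $\mathbb{D}$ and has no NT limit in $\overline{\mathbb{R}}$ at $\zeta$, then for every Stolz angle $S$ at $\zeta$ and $\lambda_{1}$-a.e.\ $t\in\mathbb{R}$,
\[
\int_{S\cap\{u=t\}}|\nabla u|\,d\mathcal{H}^{1}\;=\;\infty.
\]
Granting this, applied to $u=u_w$ at each $w$, the displayed formula for $L(w,r)$ gives $L(w,r)=\infty$ for $\lambda_{1}$-a.e.\ $r$ at every $w$, and Fubini (the slice $\{r:L(w,r)<\infty\}$ is $\lambda_{1}$-null for every $w$) immediately gives $L(w,r)=\infty$ for $\lambda_{3}$-a.e.\ $(w,r)$.

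The main obstacle is proving the level-set lemma itself: coarea applied to $u$ plus Spencer--Stein yields only that $t\mapsto\int_{\{u=t\}}|\nabla u|\,d\mathcal{H}^{1}$ is non-integrable, which is strictly weaker than pointwise infinity a.e. I anticipate the extra leverage comes from the holomorphic completion $h=u+i\tilde u$: the image of $S\cap\{u=t\}$ under $h$ lies on the vertical line $\{\operatorname{Re}z=t\}$, with total (multiplicity-counted) length equal to the desired integral. Finiteness of this length on a positive-measure set of levels would give $h$ a very tame ``vertical slicing'' structure, which combined with Lusin--Privalov-type boundary uniqueness should force a NT limit of $u$ at $\zeta$, contradicting the oscillation hypothesis.
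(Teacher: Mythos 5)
There is a genuine gap, and it sits exactly where you park it: the ``level-set lemma''. As you state it, the lemma is a \emph{pointwise} assertion at a single boundary point $\zeta$ (if the harmonic function $u$ has no nontangential limit in $\overline{\mathbb{R}}$ at $\zeta$, then for every Stolz angle and a.e.\ level $t$ the weighted level-length is infinite), and such pointwise versions of Plessner/Spencer/Stein-type dichotomies are false. Concretely, let $g$ map $\mathbb{D}$ conformally onto a bounded domain of finite area chosen (comb- or spiral-shaped) so that $g$, and after a rotation also $u=\operatorname{Re}(e^{i\theta}g)$, has no nontangential limit at one particular boundary point $\zeta_{0}$. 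For every Stolz angle $S$ at $\zeta_{0}$ we have $\int_{S}\left\vert \nabla u\right\vert ^{2}d\lambda _{2}\leq \int_{\mathbb{D}}\left\vert g^{\prime }\right\vert ^{2}d\lambda _{2}=\mathrm{area}<\infty $, so by the very coarea identity you invoke, $\int_{S\cap \{u=t\}}\left\vert \nabla u\right\vert \,d\mathcal{H}^{1}$ is finite for $\lambda _{1}$-a.e.\ $t$, although $u$ oscillates at $\zeta_{0}$. So the lemma can only be true in an ``almost every $\zeta$'' form, and in that form it is essentially the theorem itself (it is the $N=2$ case of the paper's harmonic analogue); your closing sketch for proving it (harmonic conjugate plus a Lusin--Privalov-type uniqueness argument) is a hope, not an argument, and the reduction steps preceding it (coarea, ruling out the limit $\infty$ via Privalov, Plessner to make each $u_{w}$ oscillate) are the easy part.

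For comparison, the paper never proves any pointwise statement. It argues by contradiction on a positive-measure set of bad $\zeta$: after establishing joint Borel measurability of $(\zeta ,w,r)\mapsto \int_{S(\zeta ,j^{-1})\cap f^{-1}(C_{w,r})}\left\vert f^{\prime }\right\vert \left\vert dz\right\vert $ (a measurability lemma your Fubini step in $(w,r)$ also needs but does not address), Tonelli produces a single circle $C_{w_{0},r_{0}}$ and a set $A(w_{0},r_{0})$ of positive measure on which this level-length is uniformly bounded. The Riesz measure $\mu =\Delta \log ^{+}(\left\vert f-w_{0}\right\vert /r_{0})$ is exactly the level-length measure; a Lebesgue density argument over the sawtooth region $\Omega =\cup _{\zeta \in A_{0}}S(\zeta ,j^{-1})$ shows $\int_{\Omega }(1-\left\vert \xi \right\vert )\,d\mu (\xi )<\infty $, so the Green potential of $\mu |_{\Omega }$ exists and $\log ^{+}(\left\vert f-w_{0}\right\vert /r_{0})$ acquires a positive harmonic majorant on $\Omega $; Fatou's theorem (via a conformal map of $\Omega $) and Privalov's theorem then force finite nontangential limits of $f$ a.e.\ on $A_{0}$, contradicting $A_{0}\subset \mathbb{T}\backslash F$. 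That potential-theoretic mechanism, which converts boundedness of level-length on a positive-measure set of vertices into a harmonic majorant, is the substance of the proof and is absent from your proposal.
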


The integral in (\ref{length}) measures the total arc length of the image of 
$S\cap f^{-1}(C_{w,r})$ under $f$, taking account of multiplicities. Thus,
although this image is contained in the circle $C_{w,r}$, condition (ii)
makes the striking assertion that its length, counting multiplicities, is
infinite for almost every choice of $(w,r)$.

Plessner's theorem holds more generally for meromorphic functions $f$ on $%
\mathbb{D}$, and the same is true of Theorem \ref{main}. We will explain at
the end of the proof of Theorem \ref{main} how the argument can be adapted
to cover meromorphic functions as well.

We next recall a further classical result concerning the boundary behaviour
of holomorphic functions (Theorem 5 in \cite{Spe}; see also Theorem X.1.3 in 
\cite{GM}, and p.\thinspace 364 of the survey article \cite{St82} for its
wider significance).

\bigskip

\noindent \textbf{Theorem B (Spencer) }\textit{Let }$f$\textit{\ be a
holomorphic function on }$\mathbb{D}$.\textit{\ Then, for }$\lambda _{1}$-%
\textit{almost every point }$\zeta $ \textit{of }$\mathbb{T}$,\textit{\
either}\newline
\textit{(i)\ }$f$\textit{\ has a finite nontangential limit at }$\zeta $%
\textit{, or\ \newline
(ii) for every Stolz angle }$S$\textit{\ at }$\zeta $\textit{,}%
\begin{equation*}
\dint_{S}\frac{\left\vert f^{\prime }\right\vert ^{2}}{\left( 1+\left\vert
f\right\vert ^{2}\right) ^{2}}d\lambda _{2}=\infty .
\end{equation*}

\bigskip

If $f$ and $S$ are as above, then the co-area formula (see Section 3.4.3 of 
\cite{EG}, or Section 1.2.4 of \cite{Maz}) tells us that%
\begin{equation*}
\int_{S\cap \{a<\left\vert f-w\right\vert <b\}}\left\vert f^{\prime
}\right\vert ^{2}d\lambda _{2}=\int_{(a,b)}\int_{S\cap \left\{ \left\vert
f-w\right\vert =t\right\} }\left\vert f^{\prime }(z)\right\vert \left\vert
dz\right\vert ~d\lambda _{1}(t)
\end{equation*}%
whenever $w\in \mathbb{C}$ and $0\leq a<b$. Hence Theorem \ref{main} is also
much stronger than Spencer's result. In particular, condition (ii) of
Theorem \ref{main} clearly implies that, for every Stolz angle $S$,%
\begin{equation*}
\int_{S\cap \{\left\vert f-w\right\vert <r\}}\left\vert f^{\prime
}\right\vert ^{2}d\lambda _{2}=\infty \text{ \ \ \ }(w\in \mathbb{C},r>0).
\end{equation*}%
(This last integral measures the total area of the image of $S\cap
\{\left\vert f-w\right\vert <r\}$ under $f$, counting multiplicities.)

Theorem \ref{main}, and its generalization to meromorphic functions, will be
proved in the next section, after which we will briefly discuss its
application to the theory of universal series. The final section of the
paper presents an analogue of Theorem \ref{main} for harmonic functions on
halfspaces, which strengthens well known results of Carleson and Stein.

\section{Proof of Theorem \protect\ref{main}}

We define nontangential approach regions at points $\zeta $ of $\mathbb{T}$
by 
\begin{equation*}
S(\zeta ,\delta )=\left\{ z\in \mathbb{D}:\sqrt{1-\delta ^{2}}\left\vert
z-\zeta \right\vert <1-\left\vert z\right\vert <\delta \right\} \text{ \ \ \ 
}(0<\delta <1).
\end{equation*}

Let $f$ be a holomorphic function on $\mathbb{D}$. Then it is well known
that the set of points in $\mathbb{T}$ at which $f$ has a finite
nontangential limit is a Borel subset of $\mathbb{T}$. It follows easily
from the lemma below that the same can also be said of the set of points $%
\zeta $ in $\mathbb{T}$ for which condition (ii) of Theorem \ref{main}
holds. Let 
\begin{equation}
L_{j}(\zeta ,w,r)=\dint_{S(\zeta ,j^{-1})\cap f^{-1}(C_{w,r})}\left\vert
f^{\prime }(z)\right\vert \left\vert dz\right\vert \text{ \ \ \ }(\zeta \in 
\mathbb{T},w\in \mathbb{C},r>0)  \label{Lj}
\end{equation}%
for each $j\in \{2,3,...\}$.

\begin{lemma}
\label{Borel}The function $L_{j}$ is Borel measurable on $\mathbb{T}\times 
\mathbb{C\times }(0,\infty )$ for each $j\in \{2,3,...\}$.
\end{lemma}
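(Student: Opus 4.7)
The plan is to realise $L_{j}(\zeta,w,r)$ as the (almost-everywhere) Lebesgue density in $r$ of a jointly Borel-measurable ``thickened'' version of the integral, and then to use Lebesgue differentiation to exhibit $L_{j}$ as a limsup of explicitly Borel functions.

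For $0\le a<b$ I would introduce the auxiliary quantity
\[
\Phi(\zeta,w,a,b)\;=\;\dint_{S(\zeta,j^{-1})\cap\{a<|f-w|<b\}}|f'(z)|^{2}\,d\lambda_{2}(z).
\]
The defining conditions $\sqrt{1-j^{-2}}|z-\zeta|<1-|z|<j^{-1}$ and $a<|f(z)-w|<b$ are strict inequalities between jointly continuous functions of $(\zeta,w,a,b,z)$, so the region of integration is open in $\mathbb{T}\times\mathbb{C}\times(0,\infty)^{2}\times\mathbb{D}$, its indicator is lower semicontinuous, and Tonelli's theorem shows that $\Phi$ is itself lower semicontinuous---in particular Borel---on $\mathbb{T}\times\mathbb{C}\times(0,\infty)^{2}$. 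Next, applying the co-area formula (Section~3.4.3 of \cite{EG}) to the Lipschitz function $g_{w}(z)=|f(z)-w|$ on the relatively compact pieces $S(\zeta,j^{-1})\cap\{|z|<1-1/n\}$, using the easily checked identity $|\nabla g_{w}|=|f'|$ wherever $f\ne w$, and passing to the limit by monotone convergence, one obtains
\[
\Phi(\zeta,w,a,b)\;=\;\dint_{a}^{b}L_{j}(\zeta,w,t)\,dt\qquad (0\le a<b).
\]

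Hence for each fixed $(\zeta,w)$ the Lebesgue differentiation theorem identifies $L_{j}(\zeta,w,r)$ with $\lim_{n\to\infty}\tfrac{n}{2}\,\Phi(\zeta,w,r-\tfrac{1}{n},r+\tfrac{1}{n})$ for $\lambda_{1}$-a.e.\ $r$, so that
\[
\widetilde L_{j}(\zeta,w,r)\;:=\;\limsup_{n\to\infty}\tfrac{n}{2}\,\Phi\!\left(\zeta,w,r-\tfrac{1}{n},r+\tfrac{1}{n}\right)
\]
is Borel (a countable limsup of Borel functions) and equals $L_{j}$ outside a $(\lambda_{1}\otimes\lambda_{3})$-null subset of $\mathbb{T}\times\mathbb{C}\times(0,\infty)$ by Fubini. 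The main subtlety I anticipate is the distinction between pointwise Borel measurability and agreement with a Borel function up to a null set: the line-integral definition of $L_{j}$ can misbehave on the (at most countable, for each $(\zeta,w)$) set of $r$-values at which $f^{-1}(C_{w,r})$ contains a critical point of $|f-w|$, but this exceptional set is $\lambda_{1}$-null for each $(\zeta,w)$ and hence product-null overall---amply sufficient for the Fubini-type arguments used in the proof of Theorem~\ref{main}. If strict pointwise Borel measurability of $L_{j}$ itself is required, simply redefining $L_{j}$ as $\widetilde L_{j}$ on this null set does the job.
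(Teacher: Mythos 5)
There is a genuine gap, and it sits at the heart of your argument rather than in the technical periphery. Your identification of $L_{j}(\zeta ,w,r)$ with the symmetric derivative $\widetilde{L}_{j}(\zeta ,w,r)=\limsup_{n}\tfrac{n}{2}\Phi (\zeta ,w,r-\tfrac{1}{n},r+\tfrac{1}{n})$ rests on the Lebesgue differentiation theorem, which applies only where $t\mapsto L_{j}(\zeta ,w,t)$ is locally integrable, i.e.\ where $\Phi (\zeta ,w,a,b)<\infty $. But $S(\zeta ,j^{-1})$ is not relatively compact in $\mathbb{D}$ (it touches $\mathbb{T}$ at $\zeta $), and the whole point of Theorem \ref{main} is that these Dirichlet-type integrals are typically infinite; you have not excluded the case where $L_{j}(\zeta ,w,\cdot )$ is finite on a set of $t$ of positive measure yet fails to be locally integrable there, in which case $\widetilde{L}_{j}=\infty $ on a whole interval while $L_{j}$ is finite on much of it, so the claimed a.e.\ equality $L_{j}=\widetilde{L}_{j}$ breaks down (the countable set of critical values, which you do address, is not the obstruction). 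Second, your passage from ``for each fixed $(\zeta ,w)$ the exceptional $r$-set is $\lambda _{1}$-null'' to ``the exceptional set is $(\lambda _{1}\otimes \lambda _{3})$-null'' invokes Fubini for a set whose measurability is unknown --- indeed its measurability is essentially the measurability of $L_{j}$ that you are trying to prove; a set all of whose sections are null need not be contained in a measurable null set. Third, even if both steps were repaired, you would only have shown that $L_{j}$ agrees a.e.\ with a Borel function, which is weaker than the lemma; and ``redefining $L_{j}$ as $\widetilde{L}_{j}$'' is not available, because (\ref{Lj}) and condition (ii) of Theorem \ref{main} concern the actual arc-length integral, and the proof of Theorem \ref{main} uses Borel measurability of $L_{j}$ itself to conclude that the sets $E_{j}$ are Borel and to apply Tonelli to $\chi _{\{L_{j}\leq \rho \}}$.

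For contrast, the paper's proof is entirely pointwise and avoids these issues: it exhausts $\mathbb{D}\backslash Z$ ($Z$ the critical set of $f$) by sets $A_{m}$ compactly contained in $\mathbb{D}$, covers $\overline{A}_{m}$ by finitely many open sets on which $f$ is injective, notes that on such a piece the weighted level-set integral is the arc length of $f(W)\cap C_{w,r}$ and hence lower semicontinuous in $(\zeta ,w,r)$, and then uses inclusion--exclusion and the monotone limit $L_{j}=\lim_{m}L_{j}^{(m)}$ to get genuine Borel measurability. Your lower semicontinuity of $\Phi $ is correct, and the co-area identity is exactly the bridge to Spencer's theorem that the paper mentions, but to salvage your route you would need to show that the exceptional set is contained in an explicitly Borel $\lambda _{1}\otimes \lambda _{3}$-null set (including the non-locally-integrable regime, not just critical values) and then check that measurability with respect to the completion suffices at every point of the proof of Theorem \ref{main} where $L_{j}$ is used --- considerably more delicate than the paper's direct argument.
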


\begin{proof}
We dismiss the trivial case where $f$ is constant and so $L_{j}\equiv 0$.
Let $Z=\{z\in \mathbb{D}:f^{\prime }(z)=0\}$ and%
\begin{equation*}
A_{m}=\{z\in \mathbb{D}:\mathrm{dist}(z,Z)>m^{-1}\text{ \ and \ }\left\vert
z\right\vert <1-m^{-1}\}\text{ \ \ \ }(m=2,3,...),
\end{equation*}%
and then let%
\begin{equation*}
L_{j}^{(m)}(\zeta ,w,r)=\dint_{S(\zeta ,j^{-1})\cap A_{m}\cap
f^{-1}(C_{w,r})}\left\vert f^{\prime }(z)\right\vert \left\vert
dz\right\vert \text{ \ \ \ }(\zeta \in \mathbb{T},w\in \mathbb{C},r>0).
\end{equation*}%
Since $L_{j}(\zeta ,w,r)=\lim_{m\rightarrow \infty }L_{j}^{(m)}(\zeta ,w,r)$%
, it will be enough to show that $L_{j}^{(m)}$ is Borel measurable on $%
\mathbb{T}\times \mathbb{C\times }(0,\infty )$ for any $m$.

We now fix both $j$ and $m$. For any open set $U$ such that $\overline{U}%
\subset \mathbb{D}\backslash Z$, we define%
\begin{equation*}
T_{U}(\zeta ,w,r)=\dint_{S(\zeta ,j^{-1})\cap A_{m}\cap U\cap
f^{-1}(C_{w,r})}\left\vert f^{\prime }(z)\right\vert \left\vert
dz\right\vert .
\end{equation*}%
Let $W$ be an open set satisfying $\overline{W}\subset S(\zeta ,j^{-1})\cap
A_{m}\cap U$. If $(\zeta _{k},w_{k},r_{k})\rightarrow (\zeta ,w,r)$ in $%
\mathbb{T}\times \mathbb{C\times }(0,\infty )$, then there exists $k_{0}$
such that 
\begin{equation*}
\overline{W}\subset S(\zeta _{k},j^{-1})\cap A_{m}\cap U\text{ \ \ \ }(k\geq
k_{0}).
\end{equation*}%
Further, if $f|_{U}$ is injective, then $\dint_{W\cap
f^{-1}(C_{w,r})}\left\vert f^{\prime }(z)\right\vert \left\vert
dz\right\vert $ is the total arc length of $f(W)\cap C_{w,r}$, and so 
\begin{eqnarray*}
\underset{k\rightarrow \infty }{\lim \inf }~T_{U}(\zeta _{k},w_{k},r_{k})
&\geq &\underset{k\rightarrow \infty }{\lim \inf }\dint_{W\cap
f^{-1}(C_{w_{k},r_{k}})}\left\vert f^{\prime }(z)\right\vert \left\vert
dz\right\vert \\
&\geq &\dint_{W\cap f^{-1}(C_{w,r})}\left\vert f^{\prime }(z)\right\vert
\left\vert dz\right\vert .
\end{eqnarray*}%
On enlarging $W$ we see that%
\begin{equation*}
\underset{k\rightarrow \infty }{\lim \inf }~T_{U}(\zeta
_{k},w_{k},r_{k})\geq T_{U}(\zeta ,w,r),
\end{equation*}%
whence $T_{U}$ is lower semicontinuous on $\mathbb{T}\times \mathbb{C\times }%
(0,\infty )$.

Now let $U=U_{1}\cup U_{2}$, where $U_{1},U_{2}$ are open sets such that $%
\overline{U}_{i}\subset \mathbb{D}\backslash Z$ and $f|_{U_{i}}$ is
injective for each $i$. Then 
\begin{equation*}
T_{U}(\zeta ,w,r)=T_{U_{1}}(\zeta ,w,r)+T_{U_{2}}(\zeta ,w,r)-T_{U_{1}\cap
U_{2}}(\zeta ,w,r),
\end{equation*}%
so $T_{U}$ is Borel measurable. Similarly, $T_{U}$ is Borel measurable when $%
U$ is any finite union of such open sets $U_{i}$.

Since $f$ is locally injective on $\mathbb{D}\backslash Z$, we may, by
compactness, choose open sets $U_{1},...,U_{l}$ such that $\overline{U}%
_{i}\subset \mathbb{D}\backslash Z$ and $f|_{U_{i}}$ is injective for each $%
i $, and also $\overline{A}_{m}\subset U$, where $U=\cup _{i=1}^{l}U_{i}$.
Then $L_{j}^{(m)}(\zeta ,w,r)=T_{U}(\zeta ,w,r)$ and so, by the previous
paragraph, $L_{j}^{(m)}$ is Borel measurable on $\mathbb{T}\times \mathbb{%
C\times }(0,\infty )$, as required.
\end{proof}

\bigskip

\begin{proof}[Proof of Theorem \protect\ref{main}]
Let $f$ be a holomorphic function on $\mathbb{D}$. We may assume that $f$ is
nonconstant. The above lemma tells us that the function $L_{j}$ defined by (%
\ref{Lj}) is Borel measurable on $\mathbb{T}\times \mathbb{C\times }%
(0,\infty )$ for each $j$. Further, the sequence $(L_{j})$ is decreasing.
The set of all points $\zeta $ in $\mathbb{T}$ satisfying condition (ii) of
Theorem \ref{main} is given by $\cap _{j}E_{j}$, where 
\begin{equation*}
E_{j}=\left\{ \zeta \in \mathbb{T}:\int_{\mathbb{C}\times (0,\infty )}\chi
_{\{L_{j}<\infty \}}(\zeta ,w,r)~d\lambda _{3}(w,r)=0\right\} \text{ \ \ \ }%
(j\geq 2),
\end{equation*}%
and $(E_{j})$ is a decreasing sequence. The sets $E_{j}$ are Borel because
the functions $L_{j}$ are Borel. Let $F$ be the Borel set of all points in $%
\mathbb{T}$ at which $f$ has a finite nontangential limit.\ Theorem \ref%
{main} will follow if we can show that $\lambda _{1}(\mathbb{T}\backslash
(E_{j}\cup F))=0$ for each $j$. We now suppose, for the sake of
contradiction, that there exists $j$ such that $\lambda _{1}(B_{1})>0$,
where $B_{1}=\mathbb{T}\backslash (E_{j}\cup F)$. We proceed below with this
particular choice of $j$.

For each $\zeta \in B_{1}$ we know that 
\begin{equation*}
\int_{\mathbb{C}\times (0,\infty )}\chi _{\{L_{j}<\infty \}}(\zeta
,w,r)~d\lambda _{3}(w,r)>0.
\end{equation*}%
Hence we can choose $\rho >0$ large enough to ensure that $\lambda
_{1}(B_{2})>0$, where 
\begin{equation*}
B_{2}=\left\{ \zeta \in B_{1}:\int_{\mathbb{C}\times (0,\infty )}\chi
_{\{L_{j}\leq \rho \}}(\zeta ,w,r)~d\lambda _{3}(w,r)>0\right\} .
\end{equation*}%
Let%
\begin{equation*}
A(w,r)=\left\{ \zeta \in B_{2}:L_{j}(\zeta ,w,r)\leq \rho \right\} \text{ \
\ \ }(w\in \mathbb{C},r>0).
\end{equation*}%
Then%
\begin{eqnarray*}
\int_{\mathbb{C}\times (0,\infty )}\lambda _{1}(A(w,r))~d\lambda _{3}(w,r)
&=&\int_{\mathbb{C}\times (0,\infty )}\int_{B_{2}}\chi _{\{L_{j}\leq \rho
\}}(\zeta ,w,r)~d\lambda _{1}(\zeta )d\lambda _{3}(w,r) \\
&=&\int_{B_{2}}\int_{\mathbb{C}\times (0,\infty )}\chi _{\{L_{j}\leq \rho
\}}(\zeta ,w,r)~d\lambda _{3}(w,r)d\lambda _{1}(\zeta ) \\
&>&0,
\end{eqnarray*}%
by Tonelli's theorem and the choice of $B_{2}$. In particular, there exist $%
w_{0}\in \mathbb{C}$ and $r_{0}>0$ such that $\lambda _{1}(A(w_{0},r_{0}))>0$%
. For each $\zeta \in A(w_{0},r_{0})$ we know that 
\begin{equation}
\dint_{S(\zeta ,j^{-1})\cap f^{-1}(C_{w_{0},r_{0}})}\left\vert f^{\prime
}(z)\right\vert \left\vert dz\right\vert \leq \rho .  \label{A}
\end{equation}

We define arcs in $\mathbb{T}$ by%
\begin{equation*}
I(z,\delta )=\left\{ \eta \in \mathbb{T}:z\in S(\eta ,\delta )\right\} \text{
\ \ \ }(0<\delta <1,1-\delta <\left\vert z\right\vert <1).
\end{equation*}%
Thus $\zeta \in I(z,\delta )$ whenever $z\in S(\zeta ,\delta )$, and 
\begin{equation}
\frac{\lambda _{1}\left( I(z,\delta )\right) }{1-\left\vert z\right\vert }%
\rightarrow \frac{2\delta }{\sqrt{1-\delta ^{2}}}\text{ \ \ \ }(\left\vert
z\right\vert \rightarrow 1-).  \label{proj}
\end{equation}%
It follows from the Lebesgue density theorem that, for $\lambda _{1}$-almost
every point $\zeta $ of $A(w_{0},r_{0})$,%
\begin{equation*}
\underset{%
\begin{array}{c}
\left\vert z\right\vert \rightarrow 1- \\ 
z\in S(\zeta ,j^{-1})%
\end{array}%
}{\lim \inf }\frac{\lambda _{1}\left( I(z,j^{-1})\cap A(w_{0},r_{0})\right) 
}{\lambda _{1}\left( I(z,j^{-1})\right) }\geq \frac{1}{2}
\end{equation*}%
and so 
\begin{equation*}
\underset{%
\begin{array}{c}
\left\vert z\right\vert \rightarrow 1- \\ 
z\in S(\zeta ,j^{-1})%
\end{array}%
}{\lim \inf }\frac{\lambda _{1}\left( I(z,j^{-1})\cap A(w_{0},r_{0})\right) 
}{1-\left\vert z\right\vert }\geq \frac{j^{-1}}{\sqrt{1-j^{-2}}}=\frac{1}{%
\sqrt{j^{2}-1}}\text{,}
\end{equation*}%
by (\ref{proj}). Since $\lambda _{1}(A(w_{0},r_{0}))>0$, we can choose $%
\varepsilon \in (0,j^{-1})$ and a subset $A_{0}$ of $A(w_{0},r_{0})$ such
that $\lambda _{1}(A_{0})>0$ and 
\begin{equation}
\lambda _{1}\left( I(z,j^{-1})\cap A(w_{0},r_{0})\right) \geq
j^{-1}(1-\left\vert z\right\vert )\text{ \ \ \ }(1-\varepsilon <\left\vert
z\right\vert <1,z\in \Omega ),  \label{c}
\end{equation}%
where 
\begin{equation*}
\Omega =\bigcup\limits_{\zeta \in A_{0}}S(\zeta ,j^{-1}).
\end{equation*}

We obtain a measure on $\mathbb{D}$ by defining 
\begin{equation*}
\mu =\Delta \log ^{+}\frac{\left\vert f-w_{0}\right\vert }{r_{0}}
\end{equation*}%
in the sense of distributions. It has support in the set $I=\{\left\vert
f-w_{0}\right\vert =r_{0}\}$. Also, $\left\Vert \nabla \log \left\vert
f-w_{0}\right\vert \right\Vert =\left\vert f^{\prime }\right\vert
/\left\vert f-w_{0}\right\vert $ wherever $f\neq w_{0}$, and $\nabla \log
\left\vert f-w_{0}\right\vert $ is normal to $I$ on the set $\{z\in
I:f^{\prime }(z)\neq 0\}$. Given any test function $\phi \in C_{c}^{\infty }(%
\mathbb{D})$, we can choose $s\in (0,1)$ so that the support of $\phi $ is
contained in $\{z:\left\vert z\right\vert <s\}$, and apply Green's identity
on the open set 
\begin{equation*}
U=\{z:\left\vert z\right\vert <s\text{ \ and \ }\left\vert
f(z)-w_{0}\right\vert >r_{0}\}
\end{equation*}%
to see that%
\begin{eqnarray*}
\left( \Delta \log ^{+}\frac{\left\vert f-w_{0}\right\vert }{r_{0}}\right)
(\phi ) &=&\int_{U}\left( \log ^{+}\frac{\left\vert f-w_{0}\right\vert }{%
r_{0}}\right) \Delta \phi ~d\lambda _{2} \\
&=&\int_{\{\left\vert f-w_{0}\right\vert =r_{0}\}}\phi (z)\left\Vert \nabla
\log \left\vert f-w_{0}\right\vert (z)\right\Vert \left\vert dz\right\vert \\
&=&\frac{1}{r_{0}}\int_{\{\left\vert f-w_{0}\right\vert =r_{0}\}}\phi
(z)\left\vert f^{\prime }(z)\right\vert \left\vert dz\right\vert .
\end{eqnarray*}%
Hence%
\begin{equation}
\mu (J)=\frac{1}{r_{0}}\int_{J\cap f^{-1}(C_{w_{0},r_{0}})}\left\vert
f^{\prime }(z)\right\vert \left\vert dz\right\vert \text{ \ \ \ for any open
set }J\subset \mathbb{D}.  \label{mu}
\end{equation}%
Further, by (\ref{A}),%
\begin{equation*}
\int_{S(\zeta ,j^{-1})}d\mu =\frac{1}{r_{0}}\int_{S(\zeta ,j^{-1})\cap
f^{-1}(C_{w_{0},r_{0}})}\left\vert f^{\prime }(z)\right\vert \left\vert
dz\right\vert \leq \frac{\rho }{r_{0}}\text{ \ \ \ }(\zeta \in
A(w_{0},r_{0})).
\end{equation*}

We now see that%
\begin{eqnarray*}
\frac{\rho }{r_{0}}\lambda _{1}(A(w_{0},r_{0})) &\geq
&\int_{A(w_{0},r_{0})}\int_{S(\zeta ,j^{-1})}~d\mu (\xi )d\lambda _{1}(\zeta
) \\
&\geq &\int_{A(w_{0},r_{0})}\int_{\Omega }\chi _{S(\zeta ,j^{-1})}(\xi
)~d\mu (\xi )d\lambda _{1}(\zeta ) \\
&=&\int_{\Omega }\int_{A(w_{0},r_{0})}\chi _{S(\zeta ,j^{-1})}(\xi
)~d\lambda _{1}(\zeta )d\mu (\xi ) \\
&=&\int_{\Omega }\lambda _{1}(I(\xi ,j^{-1})\cap A(w_{0},r_{0}))~d\mu (\xi )
\\
&\geq &j^{-1}\int_{\Omega \cap \left\{ 1-\varepsilon <\left\vert \xi
\right\vert <1\right\} }(1-\left\vert \xi \right\vert )~d\mu (\xi ),
\end{eqnarray*}%
by (\ref{c}). In particular, this last integral is finite, so we can define
the Green potential%
\begin{equation*}
u(z)=\int_{\Omega }G_{\mathbb{D}}(z,\xi )~d\mu (\xi )\text{ \ \ \ }(z\in 
\mathbb{D}),
\end{equation*}%
where $G_{\mathbb{D}}(\cdot ,\cdot )$ denotes the Green function for the
unit disc (see Theorem 4.2.5(ii) of \cite{AG}).

We know (see Corollary 4.3.3 and Theorems 4.3.8(i) and 4.3.5 of \cite{AG})
that the function%
\begin{equation}
h(z)=\log ^{+}\frac{\left\vert f(z)-w_{0}\right\vert }{r_{0}}+\frac{u(z)}{%
2\pi }\text{ \ \ \ }(z\in \mathbb{D})  \label{Lap}
\end{equation}%
is harmonic on $\Omega $, and clearly $h>0$. We can now combine a standard
conformal mapping argument (see the proof of Theorem VI.2.2 in \cite{GM})
with Fatou's theorem for positive harmonic functions on $\mathbb{D}$
(Theorem 4.6.7 in \cite{AG}) to see that $h$ has a finite nontangential
limit at $\lambda _{1}$-almost every point of $A_{0}$. Since $h\geq \log
(\left\vert f-w_{0}\right\vert /r_{0})$, it follows that $f$ is
nontangentially bounded $\lambda _{1}$-almost everywhere on $A_{0}$, and so $%
f$ has finite nontangential limits $\lambda _{1}$-almost everywhere on $%
A_{0} $ by a theorem of Privalov (Theorem VI.2.2 of \cite{GM}). However, 
\begin{equation*}
A_{0}\subset A(w_{0},r_{0})\subset B_{2}\subset B_{1}=\mathbb{T}\backslash
(E_{j}\cup F)\subset \mathbb{T}\backslash F,
\end{equation*}%
so $f$ cannot have a finite nontangential limit at any point of $A_{0}$, and 
$\lambda _{1}(A_{0})>0$. The assumption that $\lambda _{1}(B_{1})>0$ has
thus led to a contradiction. We conclude that $\lambda _{1}(\mathbb{T}%
\backslash (E_{j}\cup F))=0$ for every $j$, and so the theorem is
established.
\end{proof}

\bigskip

\noindent \textbf{Remark }Let $f$ be a holomorphic function on $\mathbb{D}$,
and $Y$ be a countable subset of $\mathbb{C}\times (0,\infty )$. Then, for $%
\lambda _{1}$-almost every point $\zeta $ of $\mathbb{T}$, either (i) $f$
has a finite nontangential limit at $\zeta $, or (ii) for every Stolz angle $%
S$ at $\zeta $ and every pair $(w,r)$ in $Y$ equation (\ref{length}) holds.
To see this, it is enough to consider the case where $Y$ is a singleton $%
\{(w_{0},r_{0})\}$. We can then follow the outline of the proof of Theorem %
\ref{main}, provided that we now define 
\begin{equation*}
E_{j}=\left\{ \zeta \in \mathbb{T}:L_{j}(\zeta ,w_{0},r_{0})=\infty \right\}
\end{equation*}%
and choose $\rho >0$ large enough so that $\lambda _{1}(A(w_{0},r_{0}))>0$,
where%
\begin{equation*}
A(w_{0},r_{0})=\left\{ \zeta \in \mathbb{T}\backslash (E_{j}\cup
F):L_{j}(\zeta ,w_{0},r_{0})\leq \rho \right\} .
\end{equation*}%
This variant of Theorem \ref{main} also implies Plessner's theorem, since we
may choose $Y$ to be dense in $\mathbb{C}\times (0,\infty )$.

\bigskip

\begin{proof}[Extension to meromorphic functions]
Now suppose that $f$ is merely meromorphic on $\mathbb{D}$. We note that
Lemma \ref{Borel} extends easily to this case, and we follow the proof of
Theorem \ref{main} as far as the sentence containing (\ref{c}). Then 
\begin{equation*}
\Delta \log ^{+}\frac{\left\vert f-w_{0}\right\vert }{r_{0}}=\mu -2\pi \mu
_{1},
\end{equation*}%
where $\mu $ again satisfies (\ref{mu}) and $\mu _{1}$ is the measure which
counts the poles of $f$ according to multiplicity. The function $h$ in (\ref%
{Lap}) now satisfies $\Delta h=-2\pi \mu _{1}$ on $\Omega $, and so is
superharmonic there. The conformal mapping argument that we used previously
thus yields a positive superharmonic function $v$ on $\mathbb{D}$ such that $%
(-\Delta v)/(2\pi )$ is a sum of Dirac measures. By the Riesz decomposition
theorem (Theorem 4.4.1 of \cite{AG}) and Theorem 4.2.5(ii) of \cite{AG}, the
discrete measure $-\Delta v$ satisfies the Blaschke condition on $\mathbb{D}$
and $v$ has the form $v=h_{1}-\log \left\vert B\right\vert $, where $h_{1}$
is a positive harmonic function on $\mathbb{D}$ and $B$ is a Blaschke
product associated with $(-\Delta v)/(2\pi )$. Hence $v$ has a finite
nontangential limit $\lambda _{1}$-almost everywhere on $\mathbb{T}$, and so 
$h$ has a finite nontangential limit at $\lambda _{1}$-almost every point of 
$A_{0}$. The proof concludes as before.
\end{proof}

\section{Application to universal series}

A power series $\sum a_{n}z^{n}$ with radius of convergence $1$ is said to
belong to the collection $\mathcal{U}$\textit{\ }if, for every compact set $%
K\subset \mathbb{C}\backslash \mathbb{D}$ with connected complement and
every continuous function $g:K\rightarrow \mathbb{C}$ that is holomorphic on 
$K^{\circ }$, there is a subsequence $(m_{k})$ of the natural numbers such
that $\sum_{0}^{m_{k}}a_{n}z^{n}\rightarrow g(z)$ uniformly on $K$. Members
of $\mathcal{U}$ are called \textit{universal Taylor series}, and such
functions have been widely studied in recent years. Nestoridis \cite{Nes}
showed that this universal approximation property is a generic feature of
holomorphic functions on the unit disc; more precisely, he proved that $%
\mathcal{U}$ is a dense $G_{\delta }$ subset of the space of all holomorphic
functions on $\mathbb{D}$ endowed with the topology of local uniform
convergence.

In Theorem 2 of \cite{Gar} nontrivial potential theoretic arguments were
used to show that universal Taylor series cannot have nontangential limits
at a boundary set of positive measure, and so must satisfy condition (ii) of
Plessner's theorem at $\lambda _{1}$-almost every point $\zeta $ of $\mathbb{%
T}$. If we substitute Theorem \ref{main} of this paper for Plessner's
theorem in the proof given in \cite{Gar}, we immediately obtain the
following improvement. It implies, in particular, that a generic property of
holomorphic functions on $\mathbb{D}$ is that condition (ii) of Theorem \ref%
{main} holds for almost every $\zeta $ in $\mathbb{T}$.

\begin{corollary}
\label{UTS}If $f\in \mathcal{U}$, then for $\lambda _{1}$-almost every point 
$\zeta $ of $\mathbb{T}$ equation (\ref{length}) holds for every Stolz angle 
$S$\ at $\zeta $ and $\lambda _{3}$-almost every $(w,r)\in \mathbb{C}\times
(0,\infty )$.
\end{corollary}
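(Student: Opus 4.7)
The plan is to reuse the argument that establishes Theorem~2 of \cite{Gar}, replacing its appeal to Plessner's theorem by the stronger dichotomy provided by Theorem~\ref{main}. The only deep input I would take from \cite{Gar} is the statement that for $f\in\mathcal{U}$ the set of points of $\mathbb{T}$ at which $f$ has a finite nontangential limit has $\lambda_1$-measure zero. This is a genuinely nontrivial fact, resting on potential-theoretic arguments tailored to universal Taylor series in \cite{Gar}, but I would treat it as a black box and not reprove it.

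Granting this input, the deduction is immediate. I would apply Theorem~\ref{main} to $f$ to obtain a Borel set $N\subset\mathbb{T}$ with $\lambda_1(N)=0$ such that every $\zeta\in\mathbb{T}\setminus N$ either (i) is a point of finite nontangential convergence of $f$, or (ii) satisfies the conclusion of Theorem~\ref{main}, namely that (\ref{length}) holds for every Stolz angle $S$ at $\zeta$ and for $\lambda_3$-almost every $(w,r)\in\mathbb{C}\times(0,\infty)$. The universality input tells us that the subset of $\mathbb{T}\setminus N$ on which alternative~(i) is realised has $\lambda_1$-measure zero, so alternative~(ii) is realised at $\lambda_1$-almost every $\zeta\in\mathbb{T}$; this is exactly the statement of the corollary.

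I expect no substantive obstacle. The first alternative in the dichotomy of Theorem~\ref{main} coincides verbatim with the first alternative of Plessner's theorem, whereas the second is strictly stronger. The whole thrust of the \cite{Gar} argument is to rule out the first alternative on sets of positive boundary measure, so it transfers without any change. The main sanity check is simply to confirm that \cite[Theorem~2]{Gar} nowhere exploits the specific form of Plessner's weaker condition~(ii); a glance at its structure confirms that it does not, and the substitution is valid.
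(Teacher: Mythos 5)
Your proposal is correct and is essentially the paper's own argument: the paper proves the corollary by substituting Theorem \ref{main} for Plessner's theorem in the proof from \cite{Gar}, whose real content (as the paper itself notes) is precisely the fact you use as a black box, namely that a universal Taylor series has finite nontangential limits only on a set of $\lambda_1$-measure zero. Combining that with the dichotomy of Theorem \ref{main}, exactly as you do, is the intended deduction.
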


The same improvement may be made to Corollary 2 of \cite{GaMa0}, which
generalizes Theorem 2 of \cite{Gar}, and to Corollary 5 of \cite{GaMa},
which establishes the corresponding boundary behaviour of universal
Dirichlet series (in this case we would use the obvious analogue of Theorem %
\ref{main} for holomorphic functions in a halfplane).

\section{Boundary behaviour of harmonic functions}

We will now present an analogue of Theorem \ref{main} for harmonic functions
on the halfspace $\mathbb{H}=\{(x_{1},...,x_{N})\in \mathbb{R}^{N}:x_{N}>0\}$%
, where $N\geq 2$. By a Stolz domain at $y\in \partial \mathbb{H}$ we mean a
truncated cone in $\mathbb{H}$ that meets $\partial \mathbb{H}$ precisely at
its vertex $y$ and with its axis normal to $\partial \mathbb{H}$. We will
consider $\lambda _{N-1}$ as a measure on $\partial \mathbb{H}$ by
identifying this set with $\mathbb{R}^{N-1}$ in the obvious way.

If $h$ is a harmonic function on $\mathbb{H}$, then a result of Carleson 
\cite{Car} is equivalent to saying that, for $\lambda _{N-1}$-almost every
point $y$ of $\partial \mathbb{H}$, either (i) $h$ has a finite
nontangential limit at $y$, or (ii) $h(S)=\mathbb{R}$ for every Stolz domain 
$S$ at $y$. Another well-known result (Theorem 1 of \cite{St61}) may be
formulated as follows.

\bigskip

\noindent \textbf{Theorem C (Stein) }\textit{Let }$h$ \textit{be a harmonic
function on }$\mathbb{H}$\textit{. Then, for} $\lambda _{N-1}$\textit{%
-almost every} \textit{point} $y$ \textit{of }$\partial \mathbb{H}$\textit{,
either }\newline
\textit{(i) }$h$\textit{\ has a finite nontangential limit at }$y$, \textit{%
or \newline
(ii) for every Stolz domain }$S$ \textit{at }$y$,%
\begin{equation*}
\int_{S}x_{N}^{2-N}\left\Vert \nabla h(x)\right\Vert ^{2}~d\lambda
_{N}(x)=\infty .
\end{equation*}

\bigskip

Let $\sigma $ denote surface area measure on level sets of (nonconstant)
harmonic functions. As before, the co-area formula shows that, for $h$ and $%
S $ as above,%
\begin{equation*}
\int_{S\cap \{a<h<b\}}x_{N}^{2-N}\left\Vert \nabla h(x)\right\Vert
^{2}~d\lambda _{N}(x)=\int_{(a,b)}\int_{S\cap \{h=t\}}x_{N}^{2-N}\left\Vert
\nabla h(x)\right\Vert ~d\sigma (x)d\lambda _{1}(t)
\end{equation*}%
whenever $a<b$. Hence the following analogue of Theorem \ref{main}
simultaneously strengthens the results of both Carleson and Stein.

\begin{theorem}
\label{hc}\textit{Let }$h$ \textit{be a harmonic function on }$\mathbb{H}$%
\textit{. Then, for} $\lambda _{N-1}$\textit{-almost every} point $y$ of $%
\partial \mathbb{H}$\textit{, either }\newline
\textit{(i) }$h$\textit{\ has a finite nontangential limit at }$y$, \textit{%
or \newline
(ii) for every Stolz domain }$S$ \textit{at }$y$,%
\begin{equation*}
\int_{S\cap h^{-1}(\{t\})}x_{N}^{2-N}\left\Vert \nabla h(x)\right\Vert
~d\sigma (x)=\infty
\end{equation*}%
for $\lambda _{1}$-almost every $t\in \mathbb{R}$.
\end{theorem}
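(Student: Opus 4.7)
The plan is to follow the proof of Theorem \ref{main} with a direct dictionary: $\mathbb{D}\leftrightarrow\mathbb{H}$, $\mathbb{T}\leftrightarrow\partial\mathbb{H}$, $\lambda_1\leftrightarrow\lambda_{N-1}$, Stolz angles $\leftrightarrow$ truncated cones $S(y,\delta)$, the family $f^{-1}(C_{w,r})$ indexed by $\lambda_3$ $\leftrightarrow$ the level sets $h^{-1}(\{t\})$ indexed by $\lambda_1$, and the length element $|f'(z)|\,|dz|$ on circles $\leftrightarrow$ the weighted surface measure $x_N^{2-N}\|\nabla h\|\,d\sigma$ on level hypersurfaces. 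The first step is the analogue of Lemma \ref{Borel} for
\[
M_j(y,t)=\int_{S(y,j^{-1})\cap h^{-1}(\{t\})} x_N^{2-N}\|\nabla h(x)\|\,d\sigma(x),
\]
which again reduces, by exhausting $\mathbb{H}\setminus\{\nabla h=0\}$ by compact sets, to establishing lower semicontinuity of the relevant weighted integrals on pieces where $h$ is a local coordinate, and then combining finitely many such pieces by inclusion-exclusion. Consequently the set where condition (ii) fails is Borel, and so is the set $F$ of nontangential convergence points of $h$.

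Assume, for contradiction, that $B_1=\partial\mathbb{H}\setminus(E_j\cup F)$ has positive $\lambda_{N-1}$-measure, where $E_j$ is defined in obvious analogy with the disc case. Tonelli's theorem yields $\rho>0$ and $t_0\in\mathbb{R}$ for which $A(t_0):=\{y\in B_1:M_j(y,t_0)\le\rho\}$ has positive measure, and the Lebesgue density theorem produces $A_0\subset A(t_0)$ of positive measure together with $\varepsilon>0$ such that
\[
\lambda_{N-1}(I(x,j^{-1})\cap A(t_0))\ge c\,x_N^{N-1}\qquad(x\in\Omega,\ x_N<\varepsilon),
\]
where $\Omega:=\bigcup_{y\in A_0}S(y,j^{-1})$, $I(x,\delta):=\{y\in\partial\mathbb{H}:x\in S(y,\delta)\}$, and $\lambda_{N-1}(I(x,\delta))\asymp x_N^{N-1}$. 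The natural replacement for the Riesz measure $\Delta\log^+|f-w_0|/r_0$ is $\tilde\mu:=\tfrac12\Delta|h-t_0|$, which by Green's identity applied on $\{h>t_0\}$ satisfies
\[
\tilde\mu(J)=\int_{J\cap h^{-1}(\{t_0\})}\|\nabla h\|\,d\sigma\qquad(J\subset\mathbb{H}\ \text{open}).
\]
Integrating the hypothesis $\int_{S(y,j^{-1})}x_N^{2-N}\,d\tilde\mu\le\rho$ over $A(t_0)$ and swapping the order yields
\[
\rho\,\lambda_{N-1}(A(t_0))\ge c\int_{\Omega\cap\{x_N<\varepsilon\}}x_N^{2-N}\cdot x_N^{N-1}\,d\tilde\mu=c\int_{\Omega\cap\{x_N<\varepsilon\}}x_N\,d\tilde\mu,
\]
so the weight $x_N^{2-N}$ and the density factor $x_N^{N-1}$ combine to give exactly the Blaschke-type bound $\int_\Omega x_N\,d\tilde\mu<\infty$ needed for a Green potential on $\mathbb{H}$.

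Finally, set $u(x):=\int_\Omega G_{\mathbb{H}}(x,\xi)\,d\tilde\mu(\xi)$, which is finite by the above Blaschke-type bound, and define $h^*(x):=|h(x)-t_0|+\gamma_N u(x)$, where $\gamma_N$ is chosen so that $\Delta h^*=2\tilde\mu|_{\mathbb{H}\setminus\Omega}$ vanishes on $\Omega$. Then $h^*$ is nonnegative on $\mathbb{H}$ and harmonic on $\Omega$. In place of the conformal mapping argument used in the disc proof, invoke a Fatou-type theorem for positive harmonic functions on a subdomain of $\mathbb{H}$ that contains truncated cones at the relevant boundary points (via NTA-domain theory, or a Harnack-chain reduction to the halfspace Fatou theorem for positive harmonic functions), obtaining a finite nontangential limit of $h^*$ at $\lambda_{N-1}$-a.e.\ point of $A_0$. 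Since $|h-t_0|\le h^*$, $h$ is nontangentially bounded a.e.\ on $A_0$, and Calder\'on's theorem on the nontangential convergence of harmonic functions then gives nontangential limits for $h$ a.e.\ on $A_0$, contradicting $A_0\subset B_1\subset\partial\mathbb{H}\setminus F$. The main obstacle is precisely this substitute for the two-dimensional conformal mapping argument in dimensions $N\ge 3$; the weighted Fubini computation that cancels $x_N^{2-N}$ against $x_N^{N-1}$ to recover the Blaschke condition, although the crucial technical observation, is straightforward once the measure $\tilde\mu$ is set up correctly.
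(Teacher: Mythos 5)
Your outline follows essentially the same route as the paper's proof: the same Borel-measurability reduction for $M_j$ (via writing level sets locally as graphs), the same Tonelli plus Lebesgue-density argument, the measure $\left\Vert \nabla h\right\Vert \,d\sigma $ on the level surface obtained as a distributional Laplacian via Green's identity, the cancellation of the weight $x_{N}^{2-N}$ against the density factor $x_{N}^{N-1}$ to get $\int_{\Omega }x_{N}\,d\tilde{\mu}<\infty $, the Green potential majorant making a positive harmonic function on the sawtooth $\Omega $, and a Fatou-type theorem on $\Omega $. Two of your choices differ only mildly, and arguably streamline the endgame: you use $\frac{1}{2}\Delta \left\vert h-t_{0}\right\vert $ in place of the paper's $\Delta (h-t_{0})^{+}$ (the two give the same measure on $h^{-1}(\{t_{0}\})$), so that your majorant dominates $\left\vert h-t_{0}\right\vert $ and yields two-sided nontangential boundedness, after which Calder\'{o}n's local Fatou theorem finishes directly; the paper's version only bounds $h$ from above and concludes ``as in Theorem 1''. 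Also, where the paper cites the Brelot--Doob theorem for positive harmonic functions on $\Omega $, you propose Lipschitz/NTA Fatou theory or Harnack chains; this works, but you should first replace $A_{0}$ by a compact subset of positive measure so that the sawtooth is a genuine Lipschitz (NTA) domain, and you should note (as the paper does, citing Kuran) that the critical set $\{\nabla h=0\}$ is small enough not to disturb the Green's identity computation when $N\geq 3$.

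One step does need repair: unlike $\mathbb{T}$, the hyperplane $\partial \mathbb{H}$ has infinite $\lambda _{N-1}$-measure, so the set $A(t_{0})$ may have infinite measure, in which case your inequality $\rho \,\lambda _{N-1}(A(t_{0}))\geq c\int_{\Omega \cap \{x_{N}<\varepsilon \}}x_{N}\,d\tilde{\mu}$ is vacuous and the Blaschke-type bound needed to form the Green potential does not follow. The paper handles this by choosing a bounded subset $A_{1}(t_{0})\subset A(t_{0})$ of positive measure before forming $\Omega $ and running the density and Fubini arguments relative to $A_{1}(t_{0})$; make the same adjustment (it also supplies the compactness wanted for your NTA step). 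With that correction, your proposal matches the paper's argument.
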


\begin{proof}[Outline proof]
The proof of Theorem \ref{hc} is similar in pattern to that of Theorem \ref%
{main}, and simpler in some respects. We will therefore only give an outline
as a guide to the reader, and indicate a few points of difference. Let 
\begin{equation*}
S_{N}(y,\delta )=\{x\in \mathbb{H}:\sqrt{1-\delta ^{2}}\left\Vert
x-y\right\Vert <x_{N}<\delta \}\text{ \ \ \ }(y\in \partial \mathbb{H}%
,0<\delta <1).
\end{equation*}%
For each $j\geq 2$ we define the Borel function%
\begin{equation*}
M_{j}(y,t)=\int_{S_{N}(y,j^{-1})\cap h^{-1}(\{t\})}x_{N}^{2-N}\left\Vert
\nabla h(x)\right\Vert ~d\sigma (x)\text{ \ \ \ }(y\in \partial \mathbb{H}%
,t\in \mathbb{R}).
\end{equation*}%
The implicit function theorem allows us to express the set $h^{-1}(\{t\})$
locally on $\mathbb{H}\backslash \{\nabla h=0\}$ in the form%
\begin{equation*}
x_{p}=g(x_{1},...,x_{p-1},x_{p+1},...,x_{N},t)
\end{equation*}%
for some $p\in \{1,...,N\}$, where $g$ is continuously differentiable and $p$
is chosen so that $\partial h/\partial x_{p}\neq 0$. We can then argue as in
Lemma \ref{Borel} to see that $M_{j}$ is Borel measurable on $\partial 
\mathbb{H}\times \mathbb{R}$. Thus the sets%
\begin{equation*}
E_{j}=\left\{ y\in \partial \mathbb{H}:\int_{\mathbb{R}}\chi
_{\{M_{j}<\infty \}}(y,t)~d\lambda _{1}(t)=0\right\} \text{ \ \ \ }(j\geq 2)
\end{equation*}%
are Borel. The set $F$ of all boundary points at which $h$ has a finite
nontangential limit is also Borel. We suppose that there exists $j$ such
that $\lambda _{N-1}(B_{1})>0$, where $B_{1}=\partial \mathbb{H}\backslash
(E_{j}\cup F)$, and seek a contradiction.

Let $\rho >0$ be large enough so that $\lambda _{N-1}(B_{2})>0$, where%
\begin{equation*}
B_{2}=\left\{ y\in B_{1}:\int_{\mathbb{R}}\chi _{\{M_{j}\leq \rho
\}}(y,t)~d\lambda _{1}(t)>0\right\} ,
\end{equation*}%
and define%
\begin{equation*}
A(t)=\left\{ y\in B_{2}:M_{j}(y,t)\leq \rho \right\} \text{ \ \ \ }(t\in 
\mathbb{R}).
\end{equation*}%
Then%
\begin{eqnarray*}
\int_{\mathbb{R}}\lambda _{N-1}(A(t))~d\lambda _{1}(t) &=&\int_{\mathbb{R}%
}\int_{B_{2}}\chi _{\{M_{j}\leq \rho \}}(y,t)~d\lambda _{N-1}(y)d\lambda
_{1}(t) \\
&=&\int_{B_{2}}\int_{\mathbb{R}}\chi _{\{M_{j}\leq \rho \}}(y,t)~d\lambda
_{1}(t)d\lambda _{N-1}(y)>0,
\end{eqnarray*}%
so $\lambda _{N-1}(A(t_{0}))>0$ for some $t_{0}$. We choose a bounded subset 
$A_{1}(t_{0})$ of $A(t_{0})$ such that $\lambda _{N-1}(A_{1}(t_{0}))>0$. For
each $y\in A_{1}(t_{0})$ we know that%
\begin{equation}
\int_{S_{N}(y,j^{-1})\cap h^{-1}(\{t_{0}\})}x_{N}^{2-N}\left\Vert \nabla
h(x)\right\Vert ~d\sigma (x)\leq \rho .  \label{h1}
\end{equation}

If we define 
\begin{equation*}
I_{N}(x,\delta )=\{y\in \partial \mathbb{H}:x\in S_{N}(y,\delta )\}\ \ \
(0<\delta <1,0<x_{N}<\delta ),
\end{equation*}%
then $\lambda _{N-1}\left( I_{N}(x,j^{-1})\right) $ is proportional to $%
x_{N}^{N-1}$ and, as before, there is a positive constant $C(N,j)$ such that%
\begin{equation*}
\underset{%
\begin{array}{c}
x_{N}\rightarrow 0+ \\ 
x\in S_{N}(y,j^{-1})%
\end{array}%
}{\lim \inf }\frac{\lambda _{N-1}\left( I_{N}(x,j^{-1})\cap
A_{1}(t_{0})\right) }{x_{N}^{N-1}}\geq C(N,j)
\end{equation*}%
for $\lambda _{N-1}$-almost every point $y$ of $A_{1}(t_{0})$. We can choose 
$\varepsilon \in (0,j^{-1})$ and $A_{0}\subset A_{1}(t_{0})$ such that $%
\lambda _{N-1}(A_{0})>0$ and%
\begin{equation}
\lambda _{N-1}\left( I_{N}(x,j^{-1})\cap A_{1}(t_{0})\right) \geq \frac{%
C(N,j)}{2}x_{N}^{N-1}\text{ \ \ \ }(0<x_{N}<\varepsilon ,x\in \Omega ),
\label{h2}
\end{equation}%
where%
\begin{equation*}
\Omega =\bigcup\limits_{y\in A_{0}}S_{N}(y,j^{-1}).
\end{equation*}

We obtain a measure on $\mathbb{H}$ by defining $\mu =\Delta (h-t_{0})^{+}$
in the sense of distributions, and then use Green's identity to see that%
\begin{equation*}
\mu (J)=\int_{J\cap h^{-1}(\{t_{0}\})}\left\Vert \nabla h(x)\right\Vert
~d\sigma (x)\text{ \ \ for any open set }J\subset \mathbb{H}.
\end{equation*}%
(When $N\geq 3$ the set where $\nabla h=0$ need no longer be discrete, but
it is contained in an $(N-2)$-dimensional manifold: see pp.\thinspace
716,\thinspace 717 of \cite{Ku}.) From (\ref{h1}) and (\ref{h2}) we see that%
\begin{eqnarray*}
\rho \lambda _{N-1}(A_{1}(t_{0})) &\geq
&\int_{A_{1}(t_{0})}\int_{S_{N}(y,j^{-1})}x_{N}^{2-N}~d\mu (x)d\lambda
_{N-1}(y) \\
&\geq &\int_{\Omega }x_{N}^{2-N}\int_{A_{1}(t_{0})}\chi
_{S_{N}(y,j^{-1})}(x)~d\lambda _{N-1}(y)d\mu (x) \\
&\geq &\frac{C(N,j)}{2}\int_{\Omega }x_{N}~d\mu (x),
\end{eqnarray*}%
and so we can form the Green potential $u$ in $\mathbb{H}$ of the measure $%
\mu |_{\Omega }$ (see Theorem 4.2.5(iii) in \cite{AG}).

As before (by Corollary 4.3.3 and Theorems 4.3.8(i) and 4.3.5 of \cite{AG}),
the function%
\begin{equation*}
(h-t_{0})^{+}+\frac{u}{\sigma _{N}\max \{1,N-2\}},
\end{equation*}%
where $\sigma _{N}$ denotes the surface area of the unit sphere in $\mathbb{R%
}^{N}$, is positive and harmonic on $\Omega $. Although the conformal
mapping argument that we used previously is no longer available, it remains
true that any positive harmonic function on $\Omega $ has a finite
nontangential limit at $\lambda _{N-1}$-almost every point of $A_{0}$, by a
result of Brelot and Doob (Th\'{e}or\`{e}me 10 of \cite{BD}). Hence we can
proceed, as in the proof of Theorem \ref{main}, to obtain a contradiction.
\end{proof}

\bigskip

\bigskip 

\noindent \textit{Stephen J. Gardiner}

\noindent School of Mathematics and Statistics, 

\noindent University College Dublin, 

\noindent Belfield, Dublin 4, Ireland.

\noindent \textit{e-mail:} stephen.gardiner@ucd.ie

\bigskip 

\noindent \textit{Myrto Manolaki}

\noindent School of Mathematics and Statistics, 

\noindent University College Dublin, 

\noindent Belfield, Dublin 4, Ireland.

\noindent \textit{e-mail: }arhimidis8@yahoo.gr

\end{document}